\numberwithin{equation}{section}
\newtheorem{thm}{Theorem}
\newtheorem{lem}[thm]{Lemma}
\newtheorem*{lem*}{Lemma}
\theoremstyle{remark}
\newtheorem*{rem*}{Remark}
\newtheorem*{rems*}{Remarks}
\theoremstyle{definition}
\renewcommand{\tilde}{\widetilde}
\renewcommand{\hat}{\widehat}
\newcommand{\Z}{\mathbb{Z}}
\newcommand{\Q}{\mathbb{Q}}
\newcommand{\N}{\mathbb{N}}
\newcommand{\C}{\mathbb{C}}
\renewcommand{\H}{\mathbb{H}}
\renewcommand{\pmatrix}[4]{\left( \begin{smallmatrix} #1 & #2 \\ #3 & #4 \end{smallmatrix} \right)}
\newcommand{\pMatrix}[4]{\left( \begin{matrix} #1 & #2 \\ #3 & #4 \end{matrix} \right)}
\newcommand{\pfrac}[2]{\left(\frac{#1}{#2}\right)}
\DeclareMathOperator{\ord}{ord}
\DeclareMathOperator{\lcm}{lcm}
\date{}
\author{Nickolas Andersen}
\address{Department of Mathematics, Univeristy of Illinois at Urbana-Champaign}
\email{nandrsn4@illinois.edu}
\author{Holley Friedlander}
\address{Department of Mathematics, University of Massachusetts, Amherst}
\email{holleyf@math.umass.edu}
\author{Jeremy Fuller}
\address{Department of Mathematics, Purdue University}
\email{jtfuller@math.purdue.edu}
\author{Heidi Goodson}
\address{Department of Mathematics, University of Minnesota}
\email{goods052@umn.edu}
\title[Mock Theta Congruences]{Effective Congruences for Mock Theta Functions}
\begin{document}

\begin{abstract}
  Let  $M(q)=\sum c(n)q^n$ be one of Ramanujan's mock theta functions. We establish the existence of infinitely many linear congruences of the form
  \[
  	c(An+B) \equiv 0 \pmod{\ell^{j}},
  \]
where $A$ is a multiple of $\ell$ and an auxiliary prime $p$. Moreover, we give an effectively computable upper bound on the smallest such $p$ for which these congruences hold. The effective nature of our results is based on prior works of Lichtenstein \cite{Lichtenstein} and Treneer \cite{Treneer}.
\end{abstract}

\maketitle

\section{Introduction and Statement of the Results}
\label{sec:introduction_and_statement_of_the_results}

A {\it partition} of a positive integer $n$ is a non-increasing sequence of positive integers that sum to $n$.  Define $p(n)$ to be the number of partitions of a non-negative integer $n$.  Ramanujan \cite{Ramanujan} proved the linear congruences
\begin{eqnarray*}
	p(5n+4)&\equiv& 0 \pmod{5},\\
	p(7n+5)&\equiv& 0 \pmod{7},\\
	p(11n+6)&\equiv& 0 \pmod{11},
\end{eqnarray*}
which were later extended by Atkin \cite{Atkin1} and Watson \cite{Watson} to include powers of $5,7,11$. Later, Atkin \cite{Atkin2} developed a method to identify congruences modulo larger primes, such as
\begin{eqnarray*}
	p(17303n+237)&\equiv&0 \pmod{13},\\
	p(1977147619n+815655)&\equiv&0 \pmod{19},\\
	p(4063467631n+30064597)&\equiv& 0 \pmod{31}.
\end{eqnarray*}
Ahlgren and Ono \cite{Ahlgren, Ahlgren:Ono, Ono:2000} have shown that linear congruences for $p(n)$ exist for all moduli $m$ coprime to $6$.

These congruences arise from studying the arithmetic properties of the generating function
\[
	\displaystyle\sum_{n=0}^{\infty} p(n)q^n = \prod_{n=1}^{\infty} \frac{1}{1-q^n} = 1 +q+ 2q^2+3q^3+5q^4+7q^5+ \ldots,
\]  
which can also be writen in Eulerian form
\[
	\displaystyle\sum_{n=0}^{\infty} p(n)q^n =1+\sum_{n=1}^{\infty} \frac{q^{n^2}}{(q;q)_{n}^{2}},
\]
where $(x;q)_n:=(1-x)(1-xq)(1-xq^2)\cdots(1-xq^{n-1})$. By changing signs, we obtain one of Ramanujan's third-order mock theta functions
\[
	f(q)=\displaystyle\sum_{n=0}^{\infty} a_f(n)q^n:=1+\sum_{n=1}^{\infty} \frac{q^{n^2}}{(-q;q)_{n}^{2}} = 1+q-2 q^2+3 q^3-3 q^4+3 q^5 + \ldots.
\]
The coefficients $a_f(n)$ of $f(q)$ can be used to determine the number of partitions of $n$ of even rank and of odd rank \cite{Bringmann:Ono}.

The function $f(q)$ is one of Ramanujan's seventeen original mock theta functions, which are \emph{strange} $q$-series that often have combinatorial interpretations (see \cite{McIntosh} for a comprehensive survey of mock theta functions). These functions have been the source of much recent study. In \cite{Alfes, Bruinier:Ono, BPOR, Chan, Garthwaite, Waldherr}, congruences for the coefficients of various mock theta functions are established. For example, in their investigation of strongly unimodal sequences, Bryson, Ono, Pitman, and Rhoades \cite{BPOR} prove the existence of congruences for the coefficients of Ramanujan's mock theta function
\[
	\Psi(q) = \displaystyle\sum_{n=1}^{\infty}a_{\Psi}(n)q^n := \displaystyle\sum_{n=1}^{\infty}\frac{q^{n^2}}{(q;q^2)_n} = q + q^2 + q^3 + 2 q^4 + 2 q^5 + \ldots.
\]
In particular, they establish the congruence
\begin{equation} \label{eqn:psi_congr}
	a_{\Psi}(11^4 \cdot 5n+721)\equiv 0 \pmod{5}. 
\end{equation}
In \cite{Waldherr}, Waldherr shows that Ramanujan's mock theta function
\[
	\omega(q) = \sum_{n=0}^{\infty} a_{\omega}(n)q^n := \displaystyle\sum_{n=0}^{\infty} \frac{q^{2n^2+2n}}{(q;q^{2})_{n}^{2}} = 1 + 2 q^4 + 2 q^5 + 2 q^6 + \ldots
\]
satisfies
\[
	a_{\omega}(40n+27)\equiv a_{\omega}(40n+35) \equiv 0 \pmod{5}.
\]

Congruences like the examples above have also been proven for other mock theta functions, such as Ramanujan's $\phi(q)$ function \cite{Bruinier:Ono, Chan}. It is natural to ask if a general theory of such congruences exists.  In this paper, we build on the approaches of these previous works to establish the existence of linear congruences for all of Ramanujan's mock theta functions. 

If $M(q)$ is one of Ramanujan's mock theta functions, then by work of Zwegers \cite{Zwegers} there are relative prime integers $\delta$ and $\tau$ for which 
\begin{equation} \label{eq:F-M}
	F(z) = \sum  b(n) q^{n} := q^\tau M(q^{\delta})
\end{equation}
is the holomorphic part of a weight 1/2 harmonic weak Maass form (to be defined in Section \ref{nuts_and_bolts}). We obtain congruences for the coefficients of $M(q)$ as in \eqref{eqn:psi_congr} by obtaining them for $F(z)$.

\begin{thm}\label{thm:mocktheta}
	Let $M(q)$ be one of Ramanujan's mock theta functions with $F(z)$ as in \eqref{eq:F-M}, let $N$ be the level of $F$, and let $\ell^j$ be a prime power with $(\ell, N)=1$.  Then there is a prime $Q$ and infinitely many primes $p$ such that, for some $m,B\in \N$, we have 
\[
	b(p^4 \ell^m Q n + B) \equiv 0 \pmod{\ell^j}.
\]
Furthermore, the smallest such $p$ satisfies $p\leq C$, where $C$ is an effectively computable constant that depends on $\ell^j$, $N$, and other computable parameters.
\end{thm}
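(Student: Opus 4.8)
The plan is to replace the harmonic object $F$ by a genuine holomorphic cusp form modulo $\ell^j$ and then annihilate that form with a Hecke operator whose defining Frobenius condition can be located effectively. First I would use Zwegers' realization \cite{Zwegers} of $F$ as the holomorphic part of a weight $1/2$ harmonic weak Maass form, together with the sieving method of Treneer \cite{Treneer}. Applying $U_{\ell^m}$ for $m$ large, followed by a sieving operator attached to an auxiliary prime $Q$ chosen to annihilate the weight $3/2$ nonholomorphic shadow modulo $\ell^j$, should produce a series whose integral-index coefficients agree modulo $\ell^j$ with those of a holomorphic half-integral weight cusp form $g=\sum a_g(n)q^n$ on $\Gamma_0(4NQ^2\ell^s)$, with $a_g(n)\equiv b(\ell^m Q n)\pmod{\ell^j}$ up to a quadratic twist. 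The essential point is that the incomplete gamma coefficients of the nonholomorphic part become divisible by $\ell^j$ after this sieving. Because weight $1/2$ forms are Hecke-trivial theta series by Serre--Stark, I would multiply by a power of an Eisenstein series congruent to $1$ modulo $\ell^j$ to raise the weight into a range supporting genuine cuspidal eigenforms, leaving $g$ unchanged modulo $\ell^j$.

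Next I would produce the prime $p$. The reduction of $g$ modulo $\ell^j$ lies in a finite Hecke module, and through the Shimura correspondence its Hecke action is controlled by the mod $\ell^j$ Galois representation of the integral-weight lift. I would apply the Chebotarev density theorem to find primes $p$ whose Frobenius forces $g\mid T_{p^2}\equiv 0\pmod{\ell^j}$. Unwinding the half-integral weight Hecke recursion, the coefficient of $q^{pr}$ in $g\mid T_{p^2}$ is $a_g(p^3 r)$ up to terms that vanish when $\gcd(r,p)=1$, so $a_g(p^3 r)\equiv 0\pmod{\ell^j}$ for all such $r$. Choosing $B=\ell^m Q p^3 B'$ with $\gcd(B',p)=1$ and factoring $p^4\ell^m Q n+B=\ell^m Q\,p^3(pn+B')$ with $pn+B'$ coprime to $p$, the target coefficient becomes $a_g\big(p^3(pn+B')\big)\equiv 0$, which is the asserted congruence; the extra factor of $p$ in $p^4$ is precisely what keeps the progression inside the good residue class modulo $p$.

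For the effective bound I would replace qualitative Chebotarev by Lichtenstein's effective version \cite{Lichtenstein}. The number field cut out by the relevant mod $\ell^j$ representation has degree and discriminant bounded explicitly in terms of $\ell^j$, the level $4NQ^2\ell^s$, and the weight of $g$, and feeding these into the effective Chebotarev estimate yields an explicit constant $C$ with an admissible prime $p\le C$.

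I expect the main obstacle to be twofold. The first difficulty is the honest elimination of the nonholomorphic part modulo $\ell^j$: one must verify that after the operators $U_{\ell^m}$ and the $Q$-sieve every incomplete gamma contribution is $\equiv 0\pmod{\ell^j}$, which forces a careful use of the precise structure of the weight $3/2$ shadow and of the defining congruence conditions on $Q$. The second, and the genuinely new, difficulty is maintaining effectivity throughout: I would need to track the weight of $g$, the exponents $m$ and $s$, and the field generated by the coefficients through every step, so that the degree and conductor entering Lichtenstein's bound are all explicitly computable and $C$ depends only on $\ell^j$, $N$, and the stated parameters.
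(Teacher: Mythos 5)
Your overall architecture matches the paper's: Zwegers' realization of $F$, removal of the nonholomorphic part via an auxiliary prime $Q$, Treneer's theorem to produce a cusp form congruent to the sieved coefficients together with the congruence $\hat{a}(p^{3}\ell^{\alpha}n)\equiv 0\pmod{\ell^{j}}$, Lichtenstein's effective bound on the first admissible $p$, and a final arithmetic-progression substitution. But there is a genuine gap at exactly the point you flag as the main obstacle: the nonholomorphic part cannot be disposed of ``modulo $\ell^{j}$.'' Its coefficients are $a^{-}(\delta_{i}n^{2})\,\Gamma(\tfrac{1}{2},4\pi\delta_{i}n^{2}y)$, i.e.\ transcendental functions of $y$, so the assertion that ``the incomplete gamma coefficients become divisible by $\ell^{j}$'' is not meaningful, and no congruence argument can turn $f$ into a weakly holomorphic form. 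The nonholomorphic part must be annihilated \emph{identically}. This is possible precisely because (by Zwegers) it is supported on exponents $-\delta_{i}n^{2}$: choose an odd prime $Q$ with $\pfrac{\delta_{i}}{Q}=1$ for all $i$, set $\tilde{f}:=f-\pfrac{-1}{Q}f\otimes\psi_{Q}$, which kills every nonholomorphic term with $Q\nmid n$ since $\pfrac{-\delta_{i}n^{2}}{Q}=\pfrac{-1}{Q}$ there, and then twist once more: $\hat{f}:=-\tfrac{1}{2}\pfrac{-1}{Q}\tilde{f}\otimes\psi_{Q}$ is genuinely weakly holomorphic on $\Gamma_{0}(4NQ^{3})$, with $\hat{a}(n)=a^{+}(n)$ when $\pfrac{-n}{Q}=-1$ and $\hat{a}(n)=0$ otherwise. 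Your $U_{\ell^{m}}$ and Eisenstein-series steps are then subsumed in Treneer's theorem applied to $\hat{f}$ (and note that Serre--Stark concerns holomorphic weight $1/2$ forms; the weight is raised in Treneer's argument to clear poles at cusps, not because of Serre--Stark).

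Two smaller points in your endgame. Since $\hat{a}(k)$ carries information about $b(k)$ only on the progression $\pfrac{-k}{Q}=-1$, your final substitution must force the index into that residue class modulo $Q$; otherwise $\hat{a}(p^{3}\ell^{m}\cdot(\cdot))\equiv 0$ says nothing about $b$. Moreover Treneer's congruence requires the inner argument to be coprime to $\ell p$, not merely to $p$ as in your factorization $p^{3}(pn+B')$. The paper handles both constraints simultaneously by substituting $n\mapsto p\ell Qn+A$ with $(A,p\ell)=1$ and $\pfrac{-p^{3}\ell^{m}A}{Q}=-1$, yielding $b(p^{4}\ell^{m+1}Qn+B)\equiv 0\pmod{\ell^{j}}$ with $B=p^{3}\ell^{m}A$; you would need to impose the analogous conditions on $B'$.
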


\begin{rem*}
Theorem \ref{thm:mocktheta} is a special case of Theorem \ref{thm:main}, a more general result that applies to weight 1/2 harmonic Maass forms whose holomorphic parts have algebraic coefficients and whose nonholomorphic parts are period integrals of weight $3/2$ unary theta series.
\end{rem*}

\section*{Acknowledgements}

The authors would like to thank the Southwest Center for Arithmetic Geometry for supporting this research during the 2013 Arizona Winter School.  We would also like to thank Ken Ono for suggesting this project and for guiding us through the process of writing this paper.

\section{Nuts and Bolts} \label{nuts_and_bolts}
The proof of Theorem \ref{thm:main} utilizes several important concepts from the theory of modular forms and harmonic Maass forms. In this section we summarize those topics and results that will be key ingredients in the proof. 
\subsection{Harmonic Maass Forms} 
Ramanujan's mock theta functions are essentially the holomorphic parts of certain weight 1/2 harmonic Maass forms. To begin, we define half integral weight harmonic weak Maass forms.
Here ``harmonic'' refers to the fact that these functions vanish under the weight $k$ hyperbolic Laplacian $\Delta_k$, 
\begin{equation}\Delta_k:=-y^2\left(\frac{\partial^2}{\partial x^2}+\frac{\partial^2}{\partial y^2}\right)+iky\left(\frac{\partial}{\partial x}+\frac{\partial}{\partial y}\right),\end{equation}
for $z=x+iy \in \H$.

If $N$ is a positive integer with $4|N$ and $\chi$ a Dirichlet character modulo $N$, a weight $k\in\frac{1}{2}\Z$ {\em harmonic weak Maass form} for a congruence subgroup $\Gamma \in \{\Gamma_1(N),\Gamma_0(N)\}$, with Nebentypus $\chi$, is any smooth function $f:\H \to \C$ satisfying

\begin{enumerate}
\item For every $\gamma=\pmatrix{a}{b}{c}{d} \in \Gamma,$ we have
\[
f\left(\frac{az+b}{cz+d}\right)=\pfrac{c}{d}^{2k} \varepsilon_d^{-2k} \, \chi(d) \, (cz+d)^k \, f(z),
\]
where 
\[
	\varepsilon_d:= 
	\begin{cases}
		 1&\mbox{if } d \equiv 1 \mod 4, \\ 
		 i &\mbox{if } d \equiv 3 \mod 4.
	\end{cases}
\]

\item We have $\Delta_kf=0$. 
\item There is a polynomial $P_f=\sum_{n \geq 0} c^+(n)q^n \in \C[q^{-1}]$ such that 
\(
f(z)-P_f(z)=O(e^{-\epsilon y})
\)
as $y \to +\infty$ for some $\epsilon>0$. Analogous conditions are required at all cusps.
\end{enumerate}
The term ``weak'' refers to the relaxed growth condition at the cusps described by (3). For convenience, we will refer to these harmonic weak Maass forms simply as harmonic Maass forms.

We adopt the following notation: if $\chi$ is a Dirichlet character modulo $N$, let $S_k(N,\chi)$ (respectively, $ M_k(N,\chi),  M_k^!(N,\chi), H_k(N,\chi)$) denote the space of cusp forms (resp., holomorphic modular forms, weakly holomorphic modular forms, harmonic Maass forms) of weight $k$ on $\Gamma_{0}(N)$ with Nebentypus $\chi$.

For $f \in H_{2-k}(N,\chi)$ and $1<k\in \frac{1}{2}\Z$, we write $f=f^++f^-$, and define 
\[
f^{+}(z):=\sum_{n \gg -\infty} a^+(n)q^n
\] 
to be the {\em holomorphic part} and
\[
f^{-}(z):=\sum_{n<0}a^-(n)\Gamma(k-1,4\pi|n|y)q^n
\]
to be the {\em nonholomorphic part}, where $\Gamma(a,b):=\int_b^\infty e^{-t}t^{a-1}\,dt$ is the incomplete Gamma-function.

If $M(q)$ is one of Ramanujan's mock theta functions with $F(z)$ as in \eqref{eq:F-M}, then by \cite{Zwegers}, $f^{+}=F$ is the holomorphic part of a weight 1/2 harmonic weak Maass form whose nonholomorphic part $f^{-}$ is a period integral of a weight 3/2 unary theta series. As a consequence, there exist integers $\delta_1,\dots,\delta_h$ such that the coefficients $a^-(n)$ are supported on exponents of the form $-\delta_i m^2$.

As stated in Section \ref{sec:introduction_and_statement_of_the_results}, Theorem \ref{thm:mocktheta} is a special case of Theorem \ref{thm:main}, which applies to weight $1/2$ harmonic Maass forms with algebraic coefficients whose nonholomorphic parts are period integrals of weight 3/2 unary theta series. Essentially, these congruences are obtained from the annihilation of a cusp form $g(z)$, related to $f(z)$, by the Hecke operators $T(p^2)$. The cusp form $g(z)$ is determined by a result of Treener \cite{Treneer}. Moreover, work of Lichtenstein \cite{Lichtenstein} allows us to bound the first prime $p$ such that $T(p^2)$ annihilates $g(z)$. The details of the construction of $g(z)$ follow.

\subsection{Elements of the Proof}
To prove Theorem \ref{thm:main}, we first obtain a weakly holomorphic modular form $\hat{f}(z)$ by applying quadratic twists to annihilate the nonholomorphic part of $f(z)$. If $Q$ is an odd prime, define $\psi_{Q} := \pfrac{\bullet}{Q}$ and
\(
	G := \sum_{\lambda = 1}^{Q-1} \psi_{Q}(\lambda) e^{2\pi i \lambda/Q}.
\)
Then the {\em $Q$-quadratic twist of $f$} is defined as
\[
	f\otimes \psi_{Q} := \frac{G}{Q} \sum_{\lambda = 1}^{Q-1} \psi_{Q}(\lambda) f \big|_{\frac{1}{2}} \pMatrix{1}{-\frac{\lambda}{Q}}{0}{1}.
\]

\begin{rems*}
The definition of $f\otimes \psi_{Q}$ given in \cite[III, Proposition 17]{Koblitz} applies to modular forms, but this definition also makes sense for $f\in H_{2-k}(N,\chi)$ since the transformation $z \mapsto z-\lambda/Q$ only affects the real part of $z$ (the $\Gamma$-factor in $f^{-}$ remains unchanged). As in the modular case (see \cite[III, Proposition 17]{Koblitz}), the $n$th coefficient of $f\otimes \psi_{Q}$ is $\pfrac{n}{Q}$ times the $n$th coefficient of $f$.
\end{rems*}

The following lemma describes how twisting $f$ affects the level:

\begin{lem}\label{twistlevel}
Suppose $f$ satisfies the transformation
\(
	f |_{k} \pmatrix{a}{b}{c}{d} = \chi(d) f
\)
for all $\pmatrix{a}{b}{c}{d} \in \Gamma_{0}(N)$ and for some character $\chi$ mod $N$. Let $\psi$ be a character mod $M$, and let $N' = \lcm(NM, M^{2})$. Then
\[
	(f \otimes \psi) \big|_{k} \pMatrix{a}{b}{c}{d} = \chi(d) \psi^{2}(d) (f \otimes \psi)
\]
for all $\pmatrix{a}{b}{c}{d} \in \Gamma_{0}(N')$.
\end{lem}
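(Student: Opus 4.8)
The plan is to follow the classical matrix manipulation for character twists (cf.\ \cite[III, Proposition 17]{Koblitz}), carrying the Nebentypus and the level through the cocycle relation for the slash operator. By linearity of $|_k$, for $\gamma = \pmatrix{a}{b}{c}{d} \in \Gamma_0(N')$ we have
\[
	(f\otimes\psi)\big|_k\gamma = \frac{G}{M}\sum_{\lambda} \psi(\lambda)\, f\Big|_k\Big(\pMatrix{1}{-\lambda/M}{0}{1}\gamma\Big),
\]
so the whole argument reduces to analyzing the product
\[
	\pMatrix{1}{-\lambda/M}{0}{1}\pmatrix{a}{b}{c}{d} = \pMatrix{a-\lambda c/M}{b-\lambda d/M}{c}{d}.
\]
The idea is to factor this matrix as $\gamma_\lambda\,\pMatrix{1}{-\mu/M}{0}{1}$ with $\gamma_\lambda \in \Gamma_0(N)$ and $\mu = \mu(\lambda) \in \Z$, so that the hypothesis $f|_k\gamma_\lambda = \chi(\cdot)\,f$ can be applied and the sum reassembled into $f\otimes\psi$.

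First I would produce this factorization explicitly. Matching entries forces $\gamma_\lambda = \pMatrix{a-\lambda c/M}{\ast}{c}{d+c\mu/M}$, and one checks $\det\gamma_\lambda = 1$ automatically. The three integrality/congruence requirements each trace back to a factor of $N' = \lcm(NM,M^2)$ dividing $c$: since $M^2 \mid N' \mid c$, the entry $a-\lambda c/M$ is an integer; choosing $\mu \equiv d^2\lambda \pmod M$ — possible because $ad \equiv 1 \pmod M$ (as $M \mid c$ and $ad-bc=1$), so $a$ is invertible mod $M$ with $a^{-1}\equiv d$ — makes the upper-right entry an integer; and since $NM \mid N' \mid c$ we get $N \mid c/M$, whence the lower-right entry satisfies $d + (c/M)\mu \equiv d \pmod N$. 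Thus $\gamma_\lambda \in \Gamma_0(N)$ and the hypothesis gives $f|_k\gamma_\lambda = \chi\big(d+(c/M)\mu\big)\, f = \chi(d)\,f$.

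With the factorization in hand, $f|_k\big(\pMatrix{1}{-\lambda/M}{0}{1}\gamma\big) = \chi(d)\, f|_k\pMatrix{1}{-\mu/M}{0}{1}$, so
\[
	(f\otimes\psi)\big|_k\gamma = \chi(d)\,\frac{G}{M}\sum_\lambda \psi(\lambda)\, f\big|_k\pMatrix{1}{-\mu/M}{0}{1}.
\]
The last step is to reindex by the bijection $\lambda \mapsto \mu \equiv d^2\lambda \pmod M$ of $\Z/M\Z$, which is legitimate since changing $\mu$ by $M$ inserts only an integer translation, under which $f$ is invariant, so the summand depends only on $\mu \bmod M$. Writing $\lambda \equiv a^2\mu \pmod M$ gives $\psi(\lambda) = \psi(a)^2\psi(\mu)$, and $\psi(a) = \overline{\psi(d)}$; for the quadratic characters $\psi_Q$ relevant here this yields $\psi(a)^2 = \psi^2(d)$. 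The remaining sum is exactly $f\otimes\psi$, giving $(f\otimes\psi)|_k\gamma = \chi(d)\psi^2(d)(f\otimes\psi)$, as claimed.

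The main obstacle is not the matrix bookkeeping but the half-integral-weight multiplier system. For $k \in \frac12\Z$ the slash operator carries the theta-multiplier $\pfrac{c}{d}^{2k}\varepsilon_d^{-2k}$, and the cocycle identity $(f|_k A)|_k B = f|_k(AB)$ used repeatedly above holds only up to roots of unity unless one works consistently in the metaplectic cover. I would therefore carry out the computation with explicit automorphy factors (or in the metaplectic group), verifying that the multipliers attached to $\gamma$, to $\gamma_\lambda$, and to the two translations recombine to give precisely $\pfrac{c}{d}^{2k}\varepsilon_d^{-2k}\chi(d)\psi^2(d)$; this compatibility, rather than the level computation, is where the care is needed.
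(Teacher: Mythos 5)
Your proposal is correct and follows essentially the same route as the paper: the identity $\pMatrix{1}{-\lambda/M}{0}{1}\gamma = \gamma_\lambda \pMatrix{1}{-\mu/M}{0}{1}$ with $\mu \equiv d^{2}\lambda \pmod{M}$ (the paper's $\lambda'$ defined by $a\lambda' \equiv d\lambda$), the verification that $N' \mid c$ forces $\gamma_\lambda \in \Gamma_0(N)$, and the reindexing over $\lambda \mapsto \mu$, which is exactly the ``standard argument'' the paper defers to Koblitz. You merely write out the details the paper omits, and your observations that the computation naively yields $\overline{\psi}^{2}(d)$ (coinciding with $\psi^{2}(d)$ for the quadratic characters actually used) and that the half-integral-weight multiplier needs checking are careful, not a deviation.
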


\begin{proof}
	Let $\pmatrix{a}{b}{c}{d} \in \Gamma_{0}(N')$. For each $\lambda$ with $0\leq \lambda < M$, let $\lambda'$ denote the smallest nonnegative integer satisfying $\lambda' a \equiv \lambda d \pmod{M}$. Then we have
	\[
		\pMatrix{1}{-\lambda/M}{0}{1} \pMatrix{a}{b}{c}{d} \pMatrix{1}{-\lambda/M}{0}{1}^{-1} = \pMatrix  {a-\frac{c \lambda }{M}} {b+\frac{a \lambda' - d \lambda }{M}-\frac{c \lambda \lambda '}{M^2}} {c} {d+\frac{c \lambda '}{M}} \in \Gamma_{0}(N').
	\]
	The lemma now follows from a standard argument (see \cite[Proposition III.17(b)]{Koblitz}).
\end{proof}

In the proof of Theorem \ref{thm:main}, we require a cusp form $g(z)$ with the property that $\hat{f}(z) \equiv g(z) \pmod{\ell^j}$. The existence of such a cusp form is guaranteed by work of Treneer \cite{Treneer}. We first fix some notation. For $f\in M_{k}^{!}(N,\chi)$ and a prime $\ell$, define $\alpha=\alpha(f,\ell)$ and $\beta=\beta(f,\ell)$ to be the smallest nonnegative integers satisfying
\begin{equation} \label{def:alpha-beta}
		-\ell^{\alpha} < 4\min_{\ell^{2}|c} \{ \ord_{\frac{a}{c}} \hat{f} \}, \qquad
		-\ell^{\beta} <  \min_{\ell^{2} \nmid c} \{ \ord_{\frac{a}{c}} \hat{f} \},
\end{equation}
where $\frac{a}{c}$ runs over a set of representatives for the cusps of $\Gamma_{0}(N)$. Theorem \ref{thm:treneer} below follows from Theorems 1.1 and 3.1 of \cite{Treneer}, along with the proof of Theorem 3.1 of \cite{Treneer}.

\begin{thm}{\cite[Theorem 3.1]{Treneer}}\label{thm:treneer}
Let $\ell^{j}$ be an odd prime power and let $N$ be a positive integer with $(\ell,N)=1$. Suppose that $f(z)=\sum a(n)q^n \in M_{\frac{k}{2}}^{!}(4N,\chi)$ has algebraic integer coefficients. If $\alpha=\alpha(f,\ell)$ and $\beta=\beta(f,\ell)$ as in \eqref{def:alpha-beta}, then there is a cusp form 
\[
	g(z) \in S_{\frac{k}{2}+\frac{\ell^\beta(\ell^2-1)}{2}}(\Gamma_0(N\ell^2),\chi\psi_\ell^{k\alpha})
\]
such that
\[
	g(z) \equiv \sum_{\ell \nmid n} a(\ell^\alpha n)q^n \pmod {\ell^j}.
\]
Further, a positive proportion of primes $p \equiv -1 \pmod{4N \ell^{j}}$ satisfy
\[
	a(p^{3} \ell^{\alpha} n)\equiv 0 \pmod{\ell^{j}}
\]
for all $n$ coprime to $\ell p$.
\end{thm}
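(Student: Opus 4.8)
The plan is to establish the two assertions in turn: first build the cusp form $g$ whose $q$-expansion is $\equiv \sum_{\ell\nmid n} a(\ell^\alpha n)q^n \pmod{\ell^j}$, and then run a Hecke-annihilation/density argument on $g$ to deduce the divisibility of $a(p^3\ell^\alpha n)$ for a positive proportion of primes $p\equiv -1\pmod{4N\ell^j}$.

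For the construction of $g$, the central tools are the operators $U_\ell$ and $V_\ell$ and an auxiliary holomorphic form congruent to $1$ modulo $\ell^j$. First I would record that for the odd prime $\ell$ there is a holomorphic form $E_{\ell,\beta}$ of weight $\frac{\ell^\beta(\ell^2-1)}{2}$ on $\Gamma_0(N\ell^2)$ whose $q$-expansion at $\infty$ is $\equiv 1\pmod{\ell^j}$ and which vanishes at every cusp $\tfrac ac$ with $\ell^2\nmid c$ to order at least the largest pole order of $f$ there; the defining inequality for $\beta$ in \eqref{def:alpha-beta} is exactly what guarantees such an $E_{\ell,\beta}$ suffices. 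Next I would analyze how $U_\ell$ acts on the principal parts of $f$ at the cusps, via the coset decomposition $f\mapsto \sum_{\nu\bmod\ell} f\,|_{k/2}\,\pMatrix{1}{\nu}{0}{\ell}$ (normalized by a power of $\ell$): this operator preserves holomorphy on $\H$ and raises the order of vanishing at every cusp $\tfrac ac$ with $\ell^2\mid c$, while twisting the Nebentypus by $\psi_\ell^{k}$. Iterating $\alpha$ times clears the poles at the cusps with $\ell^2\mid c$; the numerical constants in \eqref{def:alpha-beta} merely record the contraction rate, so that $\alpha$ is precisely the number of applications needed. Applying the projection $1-V_\ell U_\ell$ to $f\,|\,U_\ell^{\alpha}$ then discards the terms with $\ell\mid n$, leaving a weakly holomorphic form with $q$-expansion $\equiv\sum_{\ell\nmid n}a(\ell^\alpha n)q^n\pmod{\ell^j}$ and poles only at cusps with $\ell^2\nmid c$. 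Finally, multiplying by $E_{\ell,\beta}$ removes those remaining poles without altering the $q$-expansion at $\infty$ modulo $\ell^j$; the product is holomorphic and vanishes at every cusp, so it lies in $S_{\frac k2+\frac{\ell^\beta(\ell^2-1)}{2}}(\Gamma_0(N\ell^2),\chi\psi_\ell^{k\alpha})$, and this is $g$.

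For the second assertion I would reduce $g$ modulo $\ell^j$ and apply the half-integral weight Hecke operators $T(p^2)$. Using the explicit formula, the $n$-th coefficient of $g\,|\,T(p^2)$ equals $b(p^2n)$, plus a term proportional to $\pfrac{n}{p}\,b(n)$, plus a term proportional to $b(n/p^2)$, where $b(n)$ denotes the coefficient of $g$ and the proportionality constants are explicit powers of $p$ times values of $\chi\psi_\ell^{k\alpha}$. Reading off the coefficient of $q^{pn}$ with $p\nmid n$ leaves only the single term $b(p^3n)=a(\ell^\alpha p^3n)$, since $\pfrac{pn}{p}=0$ and $b(n/p)=0$. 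Hence it suffices to produce a positive proportion of primes $p$ with $g\,|\,T(p^2)\equiv 0\pmod{\ell^j}$. Imposing $p\equiv -1\pmod{4N\ell^j}$ pins down, modulo $\ell^j$, both the character value $\chi\psi_\ell^{k\alpha}(p)$ and the powers of $p$ occurring in $T(p^2)$, reducing the vanishing to a condition on a single eigenvalue-type quantity. The existence of a positive proportion of such $p$ then follows from Serre's theorem on the density of primes for which the Hecke operators act as $0$ modulo a prime power --- equivalently from a Chebotarev argument applied to the finite image of the mod $\ell^j$ Hecke action on the space of cusp forms (passing through the Shimura lift of $g$ to locate the relevant mod $\ell^j$ Galois representation).

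I expect the main obstacle to be this final density step: showing that within the progression $p\equiv -1\pmod{4N\ell^j}$ a positive proportion of primes annihilate $g$ under $T(p^2)$ modulo $\ell^j$. This is where one must combine the finiteness of the image of the Hecke algebra modulo $\ell^j$ with the Chebotarev density theorem, and verify that the congruence condition on $p$ is compatible with --- indeed cuts out --- a Frobenius class of positive density on which $T(p^2)$ acts trivially. By comparison, the pole-tracking in the construction of $g$ is technically involved but is essentially bookkeeping once the action of $U_\ell$ on cusp expansions and the defining properties of $E_{\ell,\beta}$ are in place.
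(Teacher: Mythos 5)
The paper does not actually prove this statement: it is quoted from Treneer, with the text noting only that it ``follows from Theorems 1.1 and 3.1 of \cite{Treneer}, along with the proof of Theorem 3.1 of \cite{Treneer},'' so there is no internal proof to compare against. Your sketch is a faithful reconstruction of Treneer's actual argument --- clearing the poles at cusps with $\ell^{2}\mid c$ via $U_{\ell^{\alpha}}$, projecting onto the $\ell\nmid n$ coefficients, multiplying by the eta-quotient $\bigl(\eta(z)^{\ell^{2}}/\eta(\ell^{2}z)\bigr)^{\ell^{\beta}}$ to remove the remaining poles without disturbing the expansion modulo $\ell^{j}$, and then the Shimura-lift/Serre--Chebotarev density step for $T(p^{2})$ --- so it takes essentially the same route as the cited source.
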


We end this section by recalling the action of Hecke operators $T(p^2)$ on half integral weight cusp forms and stating a result of Lichtenstein \cite{Lichtenstein}, which will allows us to bound the smallest prime $p$ such that $g(z)\mid T(p^2) \equiv 0 \pmod{\ell^j}$. If $\chi$ is a quadratic character, $g(z) = \sum a(n) q^{n} \in M_{\lambda+1/2} (4N,\chi),$ and $(p,4N)=1$ then
\[
	g(z) \mid T(p^{2}) := \sum \left( a(p^{2}n) + \pfrac{(-1)^{\lambda}n}{p} \chi(p) \, p^{\lambda-1} a(n) + p^{2\lambda-1} a(n/p^{2}) \right) q^{n}.
\]
To state Lichtenstein's result, we require the following notation. Let $E$ be the smallest number field containing the coefficients of $g(z)$ and let $\ell \mathcal{O}_E$ factor as $\ell \mathcal{O}_E=\prod_m \lambda_m^{e_{m}}$, where the $\lambda_m$ are prime ideals of $\mathcal{O}_E$. Let $S:=S_k(\Gamma_0(2N))$ and set $d:=\dim_{\mathbb{C}} S$. Let 
\[
	s:=\frac{k}{12} N\prod_{p| N} \left(1+\frac{1}{p}\right)
\]
denote the Sturm bound for $S$. Let $\{f_1,\dots,f_d\}$ be a basis for $S$ consisting of normalized Hecke eigenforms, and let $K$ be a number field containing the coefficients of all the $f_{i}$. Choose primes $\mu_m$ of $\mathcal{O}_{KE}$ lying above each $\lambda_m$ such that the largest $\mu_m$-adic valuation of the first $s$ coefficients of all the $f_i$ is a minimum -- let $v_m$ denote this largest valuation. Let $r_m:=[\mathcal{O}_{KE}/\mu_m:\mathbb{F}_\ell]$. Define 
\[
	B(S,\ell^j):=\prod_m \ell^{4dr_m(dv_m+\alpha_m)}
\]
and $L(S,\ell):=\ell\cdot \prod_{p| N} p$.

\begin{thm}{\cite[Theorem 1.2]{Lichtenstein}}\label{thm:lichtenstein}
  With the notation above, let $B:=B(S,\ell^j)$ and $L:=L(S,\ell)$. There is an effectively computable constant $A_1$ (defined in \cite{LMO}) such that for some prime $p\equiv -1\pmod{2N\ell^j}$ satisfying 
\[
	p\leq 2(L^{(B-1)} B^{B})^{A_{1}}
\] 
we have $g\mid T(p^{2})\equiv 0\pmod {\ell^j}$. Assuming the Generalized Riemann Hypothesis, the prime $p$ satisfies 
\[
	p\leq 280 B^2(\log B+\log L)^2.
\]
\end{thm}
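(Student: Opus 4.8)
The plan is to translate the condition $g\mid T(p^{2})\equiv 0\pmod{\ell^{j}}$ into a statement about the conjugacy class of $\operatorname{Frob}_{p}$ in a finite Galois group, and then to produce a small such $p$ by an effective Chebotarev density theorem. First I would pass through the Shimura correspondence: the lift of $g$ lands in the integral weight space $S=S_{k}(\Gamma_{0}(2N))$, and the action of $T(p^{2})$ on $g$ is governed by the action of $T(p)$ on this lift, hence by the $T(p)$-eigenvalues $a_{p}(f_{i})$ of the normalized eigenforms $f_{1},\dots,f_{d}$ spanning $S$. By Deligne's theorem each $f_{i}$ carries a Galois representation $\rho_{i,m}\colon\operatorname{Gal}(\overline{\Q}/\Q)\to\mathrm{GL}_{2}(\mathcal{O}_{KE}/\mu_{m}^{\,v_{m}+\alpha_{m}})$ for which $a_{p}(f_{i})\equiv\operatorname{tr}\rho_{i,m}(\operatorname{Frob}_{p})$. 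The precision $v_{m}+\alpha_{m}$ is exactly what is needed so that the vanishing of these traces, together with $p\equiv-1\pmod{2N\ell^{j}}$, forces $g\mid T(p^{2})\equiv 0\pmod{\ell^{j}}$ on the first $s$ Fourier coefficients; by the Sturm bound $s$ this suffices to annihilate $g$ entirely modulo $\ell^{j}$.

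Next I would assemble the $\rho_{i,m}$ together with the cyclotomic character governing the residue of $p$ modulo $2N\ell^{j}$ into a single representation, and let $\Omega/\Q$ be its fixed field, a finite Galois extension. The set of $\sigma\in\operatorname{Gal}(\Omega/\Q)$ that forces the desired congruence is closed under conjugation: it is cut out by the vanishing of the relevant trace combinations for every $i$ and $m$, intersected with the requirement that $\sigma$ invert the roots of unity $\zeta_{2N\ell^{j}}$. The crucial observation is that this set is nonempty, since complex conjugation qualifies: for the odd representations attached to modular forms it has eigenvalues $+1$ and $-1$ and hence trace $0$, and it acts by inversion on all roots of unity, matching the congruence $p\equiv-1$. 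Thus the distinguished subset has positive density.

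With the group $\operatorname{Gal}(\Omega/\Q)$ and its distinguished subset identified, the heart of the argument is the bookkeeping that yields the constant $B=B(S,\ell^{j})$. The degree $[\Omega:\Q]$ is at most the product over $m$ of the sizes of the images of the $\rho_{i,m}$, each of which sits inside $\mathrm{GL}_{2}$ of a residue ring of order a power of $\ell^{r_{m}}$; tracking the dimension $d$, the residue degrees $r_{m}$, the valuations $v_{m}$, and the exponents $\alpha_{m}$ produces the bound $[\Omega:\Q]\le B$. At the same time $\Omega$ is ramified only at primes dividing $\ell N$, so $\log\operatorname{disc}\Omega$ is bounded in terms of $L=L(S,\ell)=\ell\prod_{p\mid N}p$ and of $B$. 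Feeding the degree and the discriminant into the effective Chebotarev density theorem finishes the proof: the unconditional form of Lagarias--Montgomery--Odlyzko \cite{LMO}, with its absolute constant $A_{1}$, yields a prime $p\equiv-1\pmod{2N\ell^{j}}$ in the distinguished class with $p\le 2\bigl(L^{(B-1)}B^{B}\bigr)^{A_{1}}$, while its conditional refinement under the Generalized Riemann Hypothesis gives $p\le 280\,B^{2}(\log B+\log L)^{2}$.

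The main obstacle is making the Galois-theoretic translation genuinely effective. One must determine the exact power $\mu_{m}^{\,v_{m}+\alpha_{m}}$ to which each representation has to be reduced so that the trace condition is equivalent to annihilation modulo $\ell^{j}$ -- this is where the valuations $v_{m}$ of the eigenform coefficients and the exponents $\alpha_{m}$ tied to the precision $\ell^{j}$ enter -- and one must bound both $[\Omega:\Q]$ and $\operatorname{disc}\Omega$ explicitly, rather than merely qualitatively, in terms of $d$, $r_{m}$, $v_{m}$, $\alpha_{m}$, and the ramified primes. Once these explicit bounds are in hand, the passage through effective Chebotarev is routine and the two stated estimates follow directly.
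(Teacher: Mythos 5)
The paper offers no proof of this statement: it is quoted verbatim from Lichtenstein \cite[Theorem 1.2]{Lichtenstein}, so there is no internal argument to compare yours against. That said, your sketch is a faithful reconstruction of the actual proof in \cite{Lichtenstein}: reduce $g\mid T(p^{2})\equiv 0\pmod{\ell^{j}}$ via the Shimura correspondence and the Sturm bound to the simultaneous vanishing of the eigenvalues $a_{p}(f_{i})$ modulo suitable powers of the $\mu_{m}$ together with $p\equiv -1\pmod{2N\ell^{j}}$; realize this as a conjugacy-invariant condition on $\operatorname{Frob}_{p}$ in the compositum $\Omega$ of the fields cut out by the Deligne representations and the relevant cyclotomic character; note that complex conjugation satisfies the condition (trace $0$ for odd representations, inversion on roots of unity); and feed $[\Omega:\Q]\le B$ and the discriminant bound in terms of $L$ into the effective Chebotarev theorem of \cite{LMO} and its GRH refinement. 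The one place your bookkeeping drifts from Lichtenstein's is the precision of the reductions: the representations must be taken modulo $\mu_{m}^{\,dv_{m}+\alpha_{m}}$ rather than $\mu_{m}^{\,v_{m}+\alpha_{m}}$, the extra factor of $d$ arising because $g$ is expressed as a combination of all $d$ eigenforms whose coefficients can each contribute denominator valuation up to $v_{m}$; this is exactly what produces the exponent $4dr_{m}(dv_{m}+\alpha_{m})$ in the definition of $B$ (compare the remark following the theorem, where $E=\Q$ gives $B=\ell^{4dr(dv+j)}$).
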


\begin{rem*}
  In particular, if the coefficients of $f$ are rational integers, i.e. $E=\mathbb{Q}$, then taking $\mu=\mu_1$, $v=v_1$, and $r=r_1$, we see that $B=\ell^{4dr(dv+j)}$.
\end{rem*}

\begin{rem*}
	The quantity $B$ defined in Theorem \ref{thm:lichtenstein} arises from the elementary bound
\[
	\# \textup{GL}_{2}(\mathcal{O}_{K}/\mu^{dv+j}) \leq \ell^{4r(dv+j)}
\]
which, in certain cases, is easy to compute and is much smaller (see \cite[Example 4.3]{Lichtenstein}).
\end{rem*}

\section{Statement of the General Theorem and its Proof}
\label{sec:statement_of_the_general_theorem_and_its_proof}

Here we state our general result of which Theorem \ref{thm:mocktheta} is a special case. For ease of notation, we state it for harmonic Maass forms with holomorphic parts whose coefficients lie in $\Q$, but an analogous result holds for such forms with algebraic coefficients.

\begin{thm}\label{thm:main} 
	Suppose $f \in H_{\frac{1}{2}} (\Gamma_{0}(4N),\chi)$ has holomorphic part
\(
	f^{+} = \sum_{n\geq n_{0}} a^{+}(n) q^{n}
\) with $a^{+}(n) \in \Q$ 
and non-holomorphic part 
\[
	f^{-} = \sum_{i=1}^{h} \sum_{n > 0} a^{-}(\delta_{i} n^{2}) \Gamma\left(\frac{1}{2},4\pi\delta_{i}n^{2}\right)q^{-\delta_{i}n^{2}}
\]
for some finite set of squarefree $\delta_{i} \in \N$. 
Let $\ell^{j}$ be a prime power with $(4N,\ell)=1$. 
\begin{enumerate}[\textup{(}i\textup{)}]
\item Let $Q$ be an odd prime with $\pfrac{\delta_{i}}{Q}=1$ for $1\leq i\leq h$. Then we have
\[
	\hat{f} = \sum \hat{a} (n) q^{n} := \sum_{\pfrac{-n}{Q} = -1} a^{+}(n) q^{n} \in M_{\frac{1}{2}}^{!}(\Gamma_{0}(4NQ^{3}),\chi).
\]
\item Define $\alpha=\alpha(\hat{f},\ell)$ and $\beta=\beta(\hat{f},\ell)$ as in \eqref{def:alpha-beta}. Then there exists a cusp form
\[
	g \in S_{\frac{1}{2} + \ell^{\beta}\pfrac{\ell^{2}-1}{2}} (\Gamma_{0}(4NQ^{3}\ell^{2}),\chi\psi_{\ell}^{\alpha})
\]
with the property that
\[
	g \equiv \sum_{\ell\nmid n} \hat{a}(\ell^{\alpha}n) q^{n} \pmod{\ell^{j}}.
\]
Further, a positive proportion of the primes $p \equiv -1 \pmod{4NQ^{3}\ell^{j}}$ have
\begin{equation} \label{eqn:a-hat-cong}
	\hat{a}(p^{3}\ell^{\alpha}n) \equiv 0 \pmod{\ell^{j}}
\end{equation}
for all $n$ coprime to $\ell p$.
\item Define
\(
	S := S_{\ell^{\beta}(\ell^{2}-1)} (\Gamma_{0}(2NQ^{3}\ell^{2})),
\) and let $B=B(S,\ell^{j})$ and $L=L(S,\ell)$ as given in Theorem \ref{thm:lichtenstein} above. Then the smallest prime $p$ for which \eqref{eqn:a-hat-cong} holds satisfies
\[
	p \leq 2 (L^{B-1}B^{B})^{A_{1}}.
\]
Assuming GRH, this prime $p$ satisfies
\[
	p \leq 280 B^{2}(\log B + \log L)^{2}.
\]
\end{enumerate}
\end{thm}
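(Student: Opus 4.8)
The plan is to prove the three parts in order, with part (i)—the construction of the weakly holomorphic form $\hat f$—carrying all the genuine content, and parts (ii)--(iii) following by direct appeal to Theorems \ref{thm:treneer} and \ref{thm:lichtenstein}. For (i), rather than trying to kill $f^{-}$ with a single twist, I would realize the projection defining $\hat f$ as a combination of the single and double quadratic twist,
\[
	\hat f = \tfrac12\big((f\otimes\psi_Q)\otimes\psi_Q\big) - \tfrac12\pfrac{-1}{Q}(f\otimes\psi_Q).
\]
Since $\otimes\psi_Q$ multiplies the $n$th coefficient by $\pfrac{n}{Q}$, the $n$th coefficient of the right-hand side is $\tfrac12 a^{+}(n)\big(\mathbf 1_{Q\nmid n}-\pfrac{-n}{Q}\big)$, which equals $a^{+}(n)$ when $\pfrac{-n}{Q}=-1$ and vanishes otherwise—exactly the holomorphic part claimed for $\hat f$.

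The role of the condition $\pfrac{\delta_i}{Q}=1$ is to force the same combination to annihilate $f^{-}$: every exponent appearing in $f^{-}$ has the shape $-\delta_i n^2$, and $\pfrac{-(-\delta_i n^2)}{Q}=\pfrac{\delta_i}{Q}\pfrac{n^2}{Q}\in\{0,1\}$ is never $-1$, so the nonholomorphic coefficients of the combination cancel identically and $\hat f$ is genuinely weakly holomorphic. I would then track the level by applying Lemma \ref{twistlevel} twice: the first twist sends level $4N$ to $\lcm(4NQ,Q^2)=4NQ^2$, and the second sends this to $\lcm(4NQ^3,Q^2)=4NQ^3$, while $\psi_Q^2$ is trivial on $\Gamma_0(4NQ^3)$ so the nebentypus remains $\chi$. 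This yields $\hat f\in M_{1/2}^{!}(\Gamma_0(4NQ^3),\chi)$ and accounts for the exponent $Q^3$.

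For (ii) I would apply Theorem \ref{thm:treneer} to $\hat f$ with $k=1$; the only hypothesis requiring attention is that the coefficients be algebraic integers, which I would arrange by scaling $\hat f$ by an integer coprime to $\ell$ (harmless modulo $\ell^{j}$). This produces the cusp form $g$ in the asserted space together with $g\equiv\sum_{\ell\nmid n}\hat a(\ell^\alpha n)q^n\pmod{\ell^j}$ and the positive-proportion statement. I would record that \eqref{eqn:a-hat-cong} is equivalent to $g\mid T(p^2)\equiv 0\pmod{\ell^j}$: expanding the $T(p^2)$ formula and using $p\equiv-1$ together with $(n,\ell p)=1$ converts the vanishing of the coefficients of $g\mid T(p^2)$ into the vanishing of $\hat a(p^3\ell^\alpha n)$. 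For (iii) I would pass to the integral-weight space $S=S_{\ell^\beta(\ell^2-1)}(\Gamma_0(2NQ^3\ell^2))$ attached to $g$ by the Shimura-type correspondence (weight $2(\tfrac12+\ell^\beta\tfrac{\ell^2-1}{2})-1=\ell^\beta(\ell^2-1)$ and half the level) and apply Theorem \ref{thm:lichtenstein} verbatim with $B=B(S,\ell^j)$ and $L=L(S,\ell)$, obtaining the bound $2(L^{B-1}B^{B})^{A_1}$ on the smallest such $p$ unconditionally and $280B^2(\log B+\log L)^2$ under GRH.

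The main obstacle is part (i): one must confirm that twisting a harmonic Maass form twice stays inside the half-integral weight multiplier system and that the two applications of Lemma \ref{twistlevel} are valid (here $Q\nmid 4N$ is precisely what forces the lcm's to collapse as above), and one must verify that the combination kills $f^{-}$ identically rather than leaving behind the $Q\mid n$ terms. Once $\hat f$ has been constructed, parts (ii) and (iii) are essentially bookkeeping over the cited theorems.
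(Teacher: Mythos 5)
Your proposal is correct and follows essentially the same route as the paper: your operator $\tfrac12\big((f\otimes\psi_Q)\otimes\psi_Q\big)-\tfrac12\pfrac{-1}{Q}(f\otimes\psi_Q)$ is exactly the paper's $\hat f=-\tfrac12\pfrac{-1}{Q}\,\tilde f\otimes\psi_Q$ with $\tilde f=f-\pfrac{-1}{Q}f\otimes\psi_Q$ expanded out, and the verification that $\pfrac{\delta_i}{Q}=1$ forces the nonholomorphic exponents to satisfy $\pfrac{-m}{Q}\in\{0,1\}$, together with the two applications of Lemma \ref{twistlevel} giving level $4NQ^3$, matches the paper's argument. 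Parts (ii) and (iii) are handled, as in the paper, by direct appeal to Theorems \ref{thm:treneer} and \ref{thm:lichtenstein}.
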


\begin{proof}[Proof of Theorem \ref{thm:main}]
	To prove ($i$), we use $Q$-quadratic twists to annihilate the nonholomorphic part of $f$. We have
\begin{align*}
	f^{-} - \pfrac{-1}{Q}f^{-} \otimes \psi_{Q}
	&= \sum_{j=1}^{s} \sum_{n=1}^{\infty} \left( 1 - \pfrac{-1}{Q}\pfrac{-\delta_{j}n^{2}}{Q} \right) a^{-}(\delta_{j}n^{2}) \Gamma(1/2,4\pi \delta_{j} n^{2} y) q^{-\delta_{j}n^{2}} \\
	&=\sum_{j=1}^{s} \sum_{Q | n} a^{-}(\delta_{j}n^{2}) \Gamma(1/2,4\pi \delta_{j} n^{2} y) q^{-\delta_{j}n^{2}}.
\end{align*}
Let $\tilde{f}:= f-\pfrac{-1}{Q}f\otimes \psi_{Q}$. Since the nonholomorphic part of $\tilde{f}$ is supported on exponents of the form $-\delta_{j}Q^{2}n^{2}$, the harmonic Maass form $\hat{f} := -\frac{1}{2} \pfrac{-1}{Q} \,\tilde{f}\otimes \psi_{Q}$ has no nonholomorphic part. Therefore
\[
	\hat{f} \in M_{\frac{1}{2}}^{!} (\Gamma_{0}(4N Q^{3}, \chi)).
\]
The $n$th coefficient of $\hat{f}$ is 
\[
	-\frac{1}{2}\pfrac{-1}{Q}\pfrac{n}{Q} \left(1 - \pfrac{-1}{Q} \pfrac{n}{Q}\right)a^{+}(n) = \hat{a}(n).
\]
That is, $\hat a(n)$ is $a^+(n)$ if $\pfrac{-n}{Q}=-1$ and is 0 otherwise.

To prove ($ii$), apply Theorem \ref{thm:treneer} to the weakly holomorphic modular form $\hat{f}$.

Statement ($iii$) is immediate from Theorem \ref{thm:lichtenstein}.  \qedhere
\end{proof}

Theorem \ref{thm:mocktheta} now follows quickly from Theorem \ref{thm:main}.
\begin{proof}[Proof of Theorem \ref{thm:mocktheta}]
	Using Theorem \ref{thm:main} and taking $m=\alpha$ we obtain congruences of the form
\[
	\hat{a}(p^{3}\ell^{m}n) \equiv 0 \pmod{\ell^{j}}
\]
for all $n$ coprime to $\ell p$. We may choose $Q$ in Theorem \ref{thm:main} to also be coprime to $\ell$. Then since $\hat{a}(k)=b(n)$ when $\pfrac{-k}{Q} = -1$, we can take any integer $A$ satisfying $(A,p\ell)=1$ and
\[
	\pfrac{-p^{3}\ell^{m}A}{Q} = -1
\]
so that, replacing $n$ by $p \ell Q n + A$, we obtain the congruences
\[
	b(p^{4}\ell^{m+1}Q n + B) \equiv 0 \pmod{\ell^{j}},
\]
where $B=p^{3}\ell^{m} A$.
\end{proof}

\begin{rem*}
	For the case where $\alpha=0$, Theorem \ref{thm:mocktheta} can be improved. Namely, we can have $Q=\ell$ and, with $A$ chosen as above, we can similarly obtain congruences $b(p^{4}\ell n + B) \equiv 0 \pmod{\ell^{j}}$ where $B=p^3 A$.
\end{rem*}

\begin{rem*}
	Congruences for a Ramanujan mock theta function $M(q)=\sum c(n)q^n$ follow from Theorem \ref{thm:mocktheta} in the form $c(p^{4}\ell^{\alpha+1} Q n + B') \equiv 0 \pmod{\ell^{j}}$, where $B'=(B-\tau)/\delta\in \Q$. If $(\delta,p Q\ell)=1$, $A$ may be chosen as above to also satisfy $A\equiv \tau\ell^{-\alpha}p^{-3}\pmod{\delta}$ to get $B'\in\N$.
\end{rem*}

\bibliographystyle{plain}
\bibliography{bibliography}

\end{document}